\newtheorem{theorem}{Theorem}[section]
\newtheorem{lemma}{Lemma}[section]
\newtheorem{corollary}{Corollary}[section]
\numberwithin{equation}{section}
 \def\@evenhead{\vbox{\hbox to \textwidth{\thepage\hfil\sl\leftmark\strut}\hrule}}
 \def\@oddhead{\vbox{\hbox to \textwidth{\rightmark\hfill\thepage\strut}\hrule}}
\newcommand{\sign}{\text{sign}}
\title{\bf  Similar transformation of one class of correct restrictions}
\author{B.N. Biyarov}
\begin{document}
 \sloppy
\maketitle
%by Azam et al. [1]
%---------------------------------------------------------------------------

\markboth{\hfill{\footnotesize\rm   B.N.~Biyarov}\hfill}
{\hfill{\footnotesize\sl   Similar transformation of one class of correct restrictions}\hfill}
\vskip 0.3cm  
%Подобное преобразование одного класса корректных сужений
\vskip 0.7 cm

\noindent {\bf Key words:}  maximal (minimal) operator, correct restriction, correct extension, similar operators, singular coefficients, Riesz basis with brackets.

\vskip 0.2cm

\noindent {\bf AMS Mathematics Subject Classification:}  47A05,  47A10.
%Primary 	34A55; Secondary 58C40
\vskip 0.2cm

%%% ----------------------------------------------------------------------
%
\noindent {\bf Abstract.} The description of all correct restrictions of the maximal operator are considered in a Hilbert space. A class of correct restrictions are obtained for which a similar transformation has the domain of the fixed correct restriction. The resulting theorem is applied to the study of n-order differentiation operator with singular coefficients.

%В работе рассматривается описание всех корректных сужений максимального оператора в гильбертовом пространстве. Получен класс корректных сужений у который подобное преобразование обладает областью определения фиксированного корректного сужения. примеры использования такого преобразования при изучений операторов дифференцирования n-го порядка с сингулярными коэффициентами Приведены.

%---------------------------------------------------------------------------++

\section{\large Introduction}
\label{sec:1} 
Let a closed linear operator $L$ be given in a Hilbert space $H$. The linear equation
\begin{equation}\label{eq:1.1}
Lu=f
\end{equation}
is said to be \textit{correctly solvable} on $R(L)$ if $\|u\|\leq C\|Lu\|$ for all $u\in D(L)$ (where $C>0$ does not depend on $u$) and  \textit{everywhere solvable} if $R(L)=H$. If \eqref{eq:1.1} is simultaneously correct and solvable everywhere, then we say that $L$ is a \textit{correct operator}. A correctly solvable operator $L_0$ is said to be \textit{minimal} if $R(L_0)\neq H$. A closed operator $\widehat{L}$ is called a \textit{maximal operator} if $R(\widehat{L})=H$ and Ker$\,\widehat{L}\neq \{0\}$. An operator $A$ is called a \textit{restriction} of an operator $B$ and $B$ is said to be an  \textit{extension} of $A$ if $D(A)\subset D(B)$ and $Au=Bu$ for all $u\in D(A)$.

Note that if one of the correct restriction $L$ of a maximal operator $\widehat{L}$ is known, then the inverses of all correct restrictions of $\widehat{L}$ have in the form \cite{Kokebaev}
\begin{equation}\label{eq:1.2}
L_K^{-1}f=L^{-1}f+Kf,
\end{equation}
where $K$ is an arbitrary bounded linear operator in $H$ such that $R(K)\subset$ Ker$\,\widehat{L}$.

Let $L_0$ be some minimal operator, and let $M_0$ be another minimal operator related to $L_0$ by the equation $(L_0u, v)=(u, M_0v)$ for all $u\in D(L_0)$ and $v\in D(M_0)$. Then $\widehat{L}=M_0^*$ and $\widehat{M}=L_0^*$ are maximal operators such that $L_0\subset \widehat{L}$ and $M_0\subset \widehat{M}$. 
A correct restriction $L$ of a maximal operator $\widehat{L}$ such that $L$ is simultaneously a  correct extension of the minimal operator $L_0$ is called a \textit{boundary correct extension}.
The existence of at least one boundary correct extension $L$ was proved by Vishik in {\cite{Vishik}}, that is, $L_0\subset L\subset \widehat{L}$.

The inverse operators to all possible correct restrictions $L_K$ of the maximal operator $\widehat{L}$ have the form \eqref{eq:1.2}, then 
$$
D(L_K)=\big\{u\in D(\widehat{L}):\, (I-K\widehat{L})u\in D(L)\big\}
$$ 
is dense in $H$ if and only if $\mbox{Ker}\, (I+K^*L^*)=\{0\}$.
 All possible correct extensions $M_K$ of $M_0$ have inverses of the form
$$
M_K^{-1}f=(L_K^*)^{-1}f=(L^*)^{-1}f+K^*f,
$$
where $K$ is an arbitrary bounded linear operator in $H$ with $R(K)\subset \mbox{Ker}\, \widehat{L}$ such that 
\[\mbox{Ker}\, (I+K^*L^*)=\{0\}.\]

The main result of this work is the following
\begin{theorem}\label{th:1.1}
Let $L$ be boundary correct extension of $L_0$, that is, $L_0 \subset L\subset \widehat{L}$. If $L_K$ is densely defined in $H$ and
$$
R(K^*)\subset D(L^*)\cap D(L_K^*),
$$
where $K$ and $L$ are the operators in representation \eqref{eq:1.2}, then $\overline{KL}_K$ is bounded in $H$ and a correct restriction $L_K$ of the maximal operator $\widehat{L}$ is similar to the correct operator  
$$
A_K=L-\overline{KL}_KL \;\; \mbox{on} \;\; D(A_K)=D(L).
$$
\end{theorem}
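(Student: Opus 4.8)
The plan is to exhibit the similarity explicitly through the bounded operator $S:=I-\overline{KL}_K$, showing that $S$ is boundedly invertible, that $A_K=SL$ on $D(L)$, and that $SL_K^{-1}=L^{-1}$, from which the similarity follows at once by passing to inverses.

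First I would establish boundedness of $\overline{KL}_K$. Since $L_K$ is densely defined, so is $KL_K$ on $D(L_K)$; for $u\in D(L_K)$ and $g\in H$ one has $(KL_Ku,g)=(L_Ku,K^*g)=(u,L_K^*K^*g)$, where the last equality uses $K^*g\in D(L_K^*)$, guaranteed by $R(K^*)\subset D(L_K^*)$. Hence $(KL_K)^*=L_K^*K^*$. As $K^*$ is bounded and everywhere defined while $L_K^*$ is closed, the product $L_K^*K^*$ is closed and everywhere defined, so it is bounded by the closed graph theorem; consequently $\overline{KL}_K=(KL_K)^{**}$ is bounded. The same argument applied to $KL$ on $D(L)$, using the other inclusion $R(K^*)\subset D(L^*)$, shows that $\overline{KL}$ is bounded as well.

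Next come the two key identities. For $f\in H$ we have $L_K^{-1}f\in D(L_K)$, whence $\overline{KL}_K L_K^{-1}f=KL_KL_K^{-1}f=Kf$; therefore $SL_K^{-1}=(I-\overline{KL}_K)L_K^{-1}=L_K^{-1}-K=L^{-1}$ by \eqref{eq:1.2}. Symmetrically, setting $S':=I+\overline{KL}$, for $f\in H$ one has $L^{-1}f\in D(L)$ and $\overline{KL}\,L^{-1}f=Kf$, so $S'L^{-1}=L^{-1}+K=L_K^{-1}$. These two identities immediately yield invertibility of $S$: indeed $S'SL_K^{-1}=S'L^{-1}=L_K^{-1}$ and $SS'L^{-1}=SL_K^{-1}=L^{-1}$, and since $R(L_K^{-1})=D(L_K)$ and $R(L^{-1})=D(L)$ are dense while $S,S'$ are bounded, it follows that $S'S=SS'=I$. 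Thus $S$ is boundedly invertible with $S^{-1}=S'=I+\overline{KL}$. I expect this invertibility step to be the crux: the whole theorem rests on producing the explicit bounded inverse $I+\overline{KL}$, and the verification that it is a genuine two-sided inverse relies on the density of $D(L_K)$, i.e. on the hypothesis $\mbox{Ker}\,(I+K^*L^*)=\{0\}$, together with the boundedness just established.

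Finally I would assemble the conclusion. On $D(L)$ we have $A_K=L-\overline{KL}_K L=(I-\overline{KL}_K)L=SL$, so $A_K^{-1}=L^{-1}S^{-1}$ is bounded and $R(A_K)=S\,R(L)=H$; hence $A_K$ is a correct operator on $D(A_K)=D(L)$. Combining with $SL_K^{-1}=L^{-1}$ gives $A_K^{-1}=L^{-1}S^{-1}=(SL_K^{-1})S^{-1}=SL_K^{-1}S^{-1}$, and taking inverses yields $A_K=SL_KS^{-1}$, that is $L_K=S^{-1}A_KS$ with $S=I-\overline{KL}_K$ bounded and boundedly invertible. This is precisely the asserted similarity of $L_K$ and the correct operator $A_K$.
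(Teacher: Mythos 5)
Your proof is correct, and while it implements the same similarity transform as the paper --- conjugation by $S=I-\overline{KL}_K$ with $S^{-1}=I+\overline{KL}$ --- the way you justify the key facts is genuinely different. The paper gets boundedness of $\overline{KL}$ and $\overline{KL}_K$ from Lemma \ref{lem:2.1} (essentially your closed-graph argument) but obtains the crucial invertibility $(I+\overline{KL})^{-1}=I-\overline{KL}_K$ from Lemma \ref{lem:2.3}, whose proof works entirely at the adjoint level: it uses the resolvent identities \eqref{eq:2.2}--\eqref{eq:2.3}, the equality $D(L^*)=D(L_K^*)$ of Lemma \ref{lem:2.2}, and the closed graph theorem to invert $I+L^*K^*$, and only then dualizes. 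You instead prove two-sided invertibility directly on the original side, from the identities $SL_K^{-1}=L^{-1}$ and $(I+\overline{KL})L^{-1}=L_K^{-1}$ plus density of $D(L_K)$ and $D(L)$ --- no adjoints at all. (One small precision: what you actually need there is density of $D(L_K)$, which is hypothesized, and density of $D(L)$, which holds because $L\supset L_0$ and $L_0$ is densely defined; the condition $\mbox{Ker}\,(I+K^*L^*)=\{0\}$ is just the paper's reformulation of the former.) You also bypass the paper's citation of Fillmore--Williams: the paper defines $A_K^{-1}=(I+KL)^{-1}L_K^{-1}(I+KL)=L^{-1}(I+KL)$ and then needs that corollary to conclude $D(A_K)=D(L)$, whereas you define $A_K=SL$ on $D(L)$ outright, so the domain statement is automatic and the similarity $A_K=SL_KS^{-1}$ follows by inverting $A_K^{-1}=SL_K^{-1}S^{-1}$. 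Your route is more self-contained and elementary; the paper's adjoint-level route costs more machinery but yields by-products the paper uses later, namely $D(L^*)=D(L_K^*)$ and the explicit form of $A_K^*=L^*(I-L_K^*K^*)$ in Corollary \ref{cor:3.1}, which your argument does not produce.
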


%---------------------------------------------------------------------------++

\section{\large Preliminaries}
\label{sec:2}
In this section, we present some results for correct restrictions and extensions {\cite{Biyarov3}} which are used in Section \ref{sec:3}.

Let $A$ and $B$ be bounded operators in a Hilbert space $H$. An operators $A$ and $B$ are said to be \textit{similar} if there exist an invertible operator $P$ such that $P^{-1}AP=B$. Similar operators have the same spectrum. If at least one of two operators $A$ and $B$ is invertible, then the operators $AB$ and $BA$ are similar.

\begin{lemma}\label{lem:2.1}
Let $L$ be a densely defined correct restriction of the maximal operator $\widehat{L}$ in a Hilbert space $H$. Then the operator $KL$ is bounded on $D(L)$ (that is, $\overline{KL}$ is bounded in  $H$)  if and only if 
\[R(K^*)\subset D(L^*).\]
\end{lemma}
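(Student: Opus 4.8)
The plan is to reduce the boundedness of $KL$ to a statement about the domain of its adjoint, using the standard principle that a densely defined operator has bounded closure precisely when its adjoint is defined on all of $H$. Since $L$ is densely defined, $D(KL)=D(L)$ is dense, so $KL$ is densely defined, its adjoint $(KL)^*$ is well defined and closed, and $L^*$ exists. Recall also that $K$ is bounded, so $K^*$ is bounded and everywhere defined.

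First I would compute $(KL)^*$ explicitly. For $v\in H$, membership $v\in D\big((KL)^*\big)$ means there is $w\in H$ with $(KLu,v)=(u,w)$ for every $u\in D(L)$. Since $K$ is bounded, $(KLu,v)=(Lu,K^*v)$, so the defining relation becomes $(Lu,K^*v)=(u,w)$ for all $u\in D(L)$. By the definition of the adjoint $L^*$, such a $w$ exists if and only if $K^*v\in D(L^*)$, and then $w=L^*K^*v$. Hence
\[
D\big((KL)^*\big)=\{v\in H:\ K^*v\in D(L^*)\},\qquad (KL)^*v=L^*K^*v .
\]

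Next I would apply the boundedness criterion in both directions. If $R(K^*)\subset D(L^*)$, the display gives $D\big((KL)^*\big)=H$; since an adjoint is always closed, $(KL)^*$ is then closed and everywhere defined, hence bounded by the closed graph theorem. Being everywhere defined it is densely defined, so $KL$ is closable and $\overline{KL}=(KL)^{**}$ is the adjoint of a bounded operator, therefore bounded. Conversely, if $\overline{KL}$ is bounded, then $(KL)^*=(\overline{KL})^*$ is bounded and defined on all of $H$, so $D\big((KL)^*\big)=H$, and the domain description forces $K^*v\in D(L^*)$ for every $v\in H$, i.e.\ $R(K^*)\subset D(L^*)$.

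The step I expect to be the main obstacle is the careful justification of the adjoint-domain identity, especially the nontrivial implication that $v\in D\big((KL)^*\big)$ forces $K^*v\in D(L^*)$; this is precisely where the density of $D(L)$ and the definition of $L^*$ enter, and it is what lets the closed graph theorem finish the equivalence. It is worth noting that only the dense definiteness of $L$ is actually used here, not the full strength of its correctness.
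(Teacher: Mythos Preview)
Your proof is correct and follows essentially the same route as the paper: both identify $(KL)^*$ with $L^*K^*$ on the set $\{v:K^*v\in D(L^*)\}$, deduce boundedness of $\overline{KL}$ from the fact that the adjoint is then everywhere defined (hence bounded by the closed graph theorem), and run the converse via $(KL)^*=(\overline{KL})^*$. Your write-up is more explicit about the closability step and the domain identity for $(KL)^*$, and your closing remark that only dense definiteness of $L$ is used is accurate.
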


\begin{proof}
Let $R(K^*)\subset D(L^*)$. Then, by virtue of  $(KL)^*=L^*K^*$, we have that $\overline{KL}$ is bounded in $H$, where $\overline{KL}$ is the closure of the operator $KL$ in $H$. Here we have used the boundedness of the operator $L^*K^*$. Then the operator $KL$ is bounded on $D(L)$.
Conversely, let $KL$ be bounded on $D(L)$. Then $\overline{KL}$ is bounded on $H$, by virtue of $(KL)^*=(\overline{KL})^*$ and that $(KL)^*$ is defined on the whole space $H$. Then the operator $K^*$ transfers any element $f$ in $H$ to $D(L^*)$. Indeed, for any element $g$ of $D(L)$ we have
\[(Lg, K^*f)=(KLg, f)=(g, (KL)^*f).\]
Therefore, $K^*f$  belongs to the domain $D(L^*)$. 
\end{proof}

\begin{lemma}\label{lem:2.2}
Let $L_K$ be a densely defined correct restriction of the maximal operator $\widehat{L}$ in a Hilbert space $H$. Then $D(L^*)= D(L_K^*)$ if and only if $R(K^*)\subset D(L^*)\cap D(L_K^*)$, where $L$ and $K$ are the operators from  representation \eqref{eq:1.2}.
\end{lemma}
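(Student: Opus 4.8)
The plan is to reduce everything to the dual representation of the inverses already recorded in the introduction, namely
$$(L_K^*)^{-1}f=(L^*)^{-1}f+K^*f,\qquad f\in H,$$
which is the exact adjoint analogue of \eqref{eq:1.2}: relative to the correct restriction $L^*$ of $\widehat{M}$, the operator $L_K^*$ plays the role of $L_K$ with ``perturbation'' $K^*$. Since $L$ and $L_K$ are correct and everywhere solvable, both $(L^*)^{-1}$ and $(L_K^*)^{-1}$ are bounded operators defined on all of $H$ that map $H$ bijectively onto $D(L^*)$ and $D(L_K^*)$ respectively. The density of $D(L_K)$ in $H$ is precisely what guarantees that $L_K^*$ exists and that $(L_K^{-1})^*=(L_K^*)^{-1}$, so the displayed identity is legitimate; this is the only place where the hypothesis that $L_K$ is densely defined is genuinely used.

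For the forward implication I would assume $D(L^*)=D(L_K^*)$. Then for every $f\in H$ the displayed identity rearranges to $K^*f=(L_K^*)^{-1}f-(L^*)^{-1}f$. The first summand lies in $D(L_K^*)$ and the second in $D(L^*)$, and by assumption these domains coincide, so $K^*f\in D(L^*)=D(L^*)\cap D(L_K^*)$. As $f$ is arbitrary this gives $R(K^*)\subset D(L^*)\cap D(L_K^*)$.

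For the converse I would assume $R(K^*)\subset D(L^*)\cap D(L_K^*)$ and take any $v\in D(L^*)$. Writing $v=(L^*)^{-1}g$ with $g=L^*v$, the identity yields $v=(L_K^*)^{-1}g-K^*g$, where $(L_K^*)^{-1}g\in D(L_K^*)$ and $K^*g\in R(K^*)\subset D(L_K^*)$; hence $v\in D(L_K^*)$, proving $D(L^*)\subset D(L_K^*)$. Exchanging the roles of $L$ and $L_K$ (starting from $w\in D(L_K^*)$, writing $w=(L_K^*)^{-1}h$, and using $(L^*)^{-1}h=w-K^*h$ together with $K^*h\in R(K^*)\subset D(L^*)$) gives the reverse inclusion, so $D(L^*)=D(L_K^*)$.

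I do not expect a serious obstacle in the chain of inclusions itself, since each is a one-line manipulation of $(L_K^*)^{-1}=(L^*)^{-1}+K^*$. The delicate point to state carefully is the justification of this adjoint representation and of the fact that $(L^*)^{-1}$ and $(L_K^*)^{-1}$ are everywhere-defined bounded bijections onto their ranges $D(L^*)$ and $D(L_K^*)$ — this is exactly where correctness, everywhere solvability, and the density of $D(L_K)$ enter, and once these are in place the equivalence follows immediately.
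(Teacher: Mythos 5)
Your proof is correct and follows essentially the same route as the paper: both directions rest on the adjoint resolvent identity $(L_K^*)^{-1}f=(L^*)^{-1}f+K^*f$, using it once to read off $R(K^*)\subset D(L^*)\cap D(L_K^*)$ when the domains coincide, and in the converse to transfer membership between $D(L^*)$ and $D(L_K^*)$ term by term (the paper phrases this via the factorizations $(L^*)^{-1}(I+L^*K^*)$ and $(L_K^*)^{-1}(I-L_K^*K^*)$, which is the same argument). Your explicit justification of the adjoint identity via $(L_K^{-1})^*=(L_K^*)^{-1}$ is a point the paper leaves implicit, and it is correctly placed.
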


\begin{proof}
If $D(L^*)= D(L_K^*)$, then from  representation \eqref{eq:1.2} we easily get 
\[R(K^*)\subset D(L^*)\cap D(L_K^*)=D(L^*)= D(L_K^*)\]
Let us prove the converse. If
\[R(K^*)\subset D(L^*)\cap D(L_K^*),\]
then we obtain
\begin{equation}\label{eq:2.2}
(L_K^*)^{-1}f=(L^*)^{-1}f+K^*f=(L^*)^{-1}(I+L^*K^*)f,
\end{equation}
\begin{equation}\label{eq:2.3}
(L^*)^{-1}f=(L_K^*)^{-1}f-K^*f=(L_K^*)^{-1}(I-L_K^*K^*)f,
\end{equation}
for all $f$ in $H$. 
It follows from \eqref{eq:2.2} that $D(L_K^*)\subset D(L^*)$, and taking into account  \eqref{eq:2.3} this implies that $D(L^*)\subset D(L_K^*)$. Thus $D(L^*)= D(L_K^*)$. 
\end{proof}

\begin{corollary}\label{cor:2.3}
Let $L_K$ be any densely defined correct restriction of the maximal operator $\widehat{L}$ in a Hilbert space $H$. If $R(K^*)\subset D(L^*)$ and $\overline{KL}$ is a compact operator in $H$, then 
\[D(L^*)=D(L_K^*).\]
\end{corollary}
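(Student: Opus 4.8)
\emph{Plan of proof (of Corollary \ref{cor:2.3}).} The statement to prove is $D(L^*)=D(L_K^*)$, and the plan is to read it off from a factorization of $(L_K^*)^{-1}$ in which compactness lets us invert the middle factor. First I would record the two facts that make the argument run. Since $R(K^*)\subset D(L^*)$, Lemma \ref{lem:2.1} gives that $\overline{KL}$ is bounded, and by hypothesis it is compact; hence its adjoint
\[
C:=(\overline{KL})^{*}=L^{*}K^{*}
\]
is a compact operator defined on all of $H$. Taking adjoints in the representation \eqref{eq:1.2}, $L_K^{-1}=L^{-1}+K$, and using $(L^{-1})^{*}=(L^{*})^{-1}$ and $(L_K^{-1})^{*}=(L_K^{*})^{-1}$ (valid since $L$ and $L_K$ are densely defined) together with $R(K^*)\subset D(L^*)$, I would reproduce exactly the computation \eqref{eq:2.2}:
\[
(L_K^{*})^{-1}f=(L^{*})^{-1}f+K^{*}f=(L^{*})^{-1}(I+C)f,\qquad f\in H.
\]

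The heart of the matter is to show that the middle factor $I+C$ is boundedly invertible, and I would first establish injectivity. If $(I+C)x=0$, then the factorization gives $(L_K^{*})^{-1}x=(L^{*})^{-1}(I+C)x=0$; but $(L_K^{*})^{-1}=(L_K^{-1})^{*}$ is injective precisely because $L_K$ is densely defined (its inverse $L_K^{-1}$ has dense range $D(L_K)$), so $x=0$. Thus $\mathrm{Ker}(I+C)=\{0\}$. This is the one place where the density hypothesis on $L_K$ is used, and it is the only genuine obstacle in the argument.

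Now compactness finishes the job: since $C$ is compact and $I+C$ is injective, the Fredholm alternative (Riesz--Schauder theory) shows that $I+C$ is a bounded bijection of $H$ with bounded inverse. This is exactly the step that invokes the compactness assumption and would fail for a merely bounded $\overline{KL}$. Feeding this back into the factorization, $I+C$ maps $H$ onto $H$, so
\[
D(L_K^{*})=R\big((L_K^{*})^{-1}\big)=(L^{*})^{-1}\big((I+C)H\big)=R\big((L^{*})^{-1}\big)=D(L^{*}),
\]
which is the claim. Equivalently, the invertibility of $I+C$ yields $R(K^{*})\subset D(L_K^{*})$, and since also $R(K^{*})\subset D(L^{*})$, Lemma \ref{lem:2.2} then delivers $D(L^{*})=D(L_K^{*})$.
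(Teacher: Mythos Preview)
Your proof is correct and follows essentially the same route as the paper: both use the factorization $(L_K^{*})^{-1}=(L^{*})^{-1}(I+L^{*}K^{*})$ from \eqref{eq:2.2} and then argue, using compactness of $L^{*}K^{*}$ together with the density of $D(L_K)$, that $I+L^{*}K^{*}$ is onto, which immediately gives $D(L_K^{*})=D(L^{*})$. The paper phrases the surjectivity step as ``closed range plus dense range'' rather than your ``injective plus Fredholm alternative,'' but the content is the same.
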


\begin{proof}
Compactness of $\overline{KL}$ implies compactness of $L^*K^*$. Then $R(I+L^*K^*)$  is a closed subspace in $H$. It follows from the dense definiteness of $L_K$  that $R(I+L^*K^*)$ is a dense set in $H$. Hence $R(I+L^*K^*)=H$. Then from  equality \eqref{eq:2.2} we get $D(L^*)=D(L_K^*)$. 
\end{proof}

\begin{lemma}\label{lem:2.3}
If $R(K^*)\subset D(L^*)\cap D(L_K^*)$, then bounded operators $I+L^*K^*$ and $I-L_K^*K^*$ from \eqref{eq:2.2} and \eqref{eq:2.3}, respectively, have a bounded inverse defined on $H$.
\end{lemma}

\begin{proof}
By virtue of the density of the domains of the operators $L_K^*$ and $L^*$ it follows that the operators $I+L^*K^*$ and $I-L_K^*K^*$ are invertible. Since from \eqref{eq:2.2} and \eqref{eq:2.3} we have 
$\mbox{Ker}\, (I+L^*K^*)=\{0\}$ and $\mbox{Ker}\, (I-L_K^*K^*)=\{0\}$, respectively.
From  representations \eqref{eq:2.2} and \eqref{eq:2.3} we also note that $R(I+L^*K^*)=H$ and $R(I-L_K^*K^*)=H$, since $D(L^*)= D(L_K^*)$.
The inverse operators $(I+L^*K^*)^{-1}$ and $(I-L_K^*K^*)^{-1}$ of the closed operators $I-L_K^*K^*$
 and $I+L^*K^*$, respectively, are closed. Then the closed operators $(I+L^*K^*)^{-1}$ and $(I-L_K^*K^*)^{-1}$, defined on the whole of $H$, are bounded.
\end{proof}

Under the assumptions of Lemma \ref{lem:2.3} the operators $KL$ and $KL_K$ will be (see {\cite{Biyarov2}}) 
 restrictions of the bounded operators $\overline{KL}$ and $\overline{KL}_K$, respectively, where the bar denotes the closure of operators in $H$. Thus $(I-L_K^*K^*)^{-1}=I+L^*K^*$ and $(I-\overline{KL}_K)^{-1}=I+\overline{KL}$.

In what follows, we need the following theorem
\begin{theorem}[Theorem 1.1 {\cite[p.\,307]{Gohberg}}]\label{ThGohberg}
The sequence $\{\psi_j\}_{j=1}^{\infty}$ biorthogonal to a basis $\{\phi_j\}_{j=1}^{\infty}$ of a Hilbert space $H$ is also a basis of $H$.
\end{theorem}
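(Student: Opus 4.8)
The plan is to show that the biorthogonal sequence $\{\psi_j\}$ is itself a Schauder basis by producing, for every $y\in H$, the expansion $y=\sum_{j}(y,\phi_j)\psi_j$ together with uniqueness of the coefficients. The mechanism is a duality argument: I transfer the known good behaviour of the partial-sum projections of $\{\phi_j\}$ to their adjoints, which turn out to be exactly the partial-sum operators for $\{\psi_j\}$.

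First I would record the standard consequences of $\{\phi_j\}$ being a basis. The coefficient functionals are bounded, so by the Riesz representation theorem there exist vectors $\psi_j$ with $c_j(x)=(x,\psi_j)$; these are precisely the biorthogonal system, $(\phi_i,\psi_j)=\delta_{ij}$. The partial-sum operators $S_n x=\sum_{j=1}^{n}(x,\psi_j)\phi_j$ satisfy $S_n x\to x$ for each $x$, and by the Banach--Steinhaus theorem $M:=\sup_n\|S_n\|<\infty$. Passing to adjoints, a short computation (using $(x,\psi_j)(\phi_j,y)=(x,(y,\phi_j)\psi_j)$) gives $S_n^{*}y=\sum_{j=1}^{n}(y,\phi_j)\psi_j$, so the candidate partial-sum operators for $\{\psi_j\}$ satisfy $\|S_n^{*}\|=\|S_n\|\le M$ and are uniformly bounded. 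Two further facts then combine: $\{\psi_j\}$ is complete, since $(x,\psi_j)=0$ for all $j$ forces $x=\sum_j(x,\psi_j)\phi_j=0$; and by biorthogonality $(\psi_k,\phi_j)=\delta_{kj}$, whence $S_n^{*}\psi_k=\psi_k$ for $n\ge k$, so $S_n^{*}y\to y$ for every $y$ in the linear span of $\{\psi_j\}$, which is dense by completeness.

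I expect the main obstacle to be exactly that strong convergence $S_n\to I$ does not by itself force strong convergence $S_n^{*}\to I$; in general one only gets convergence of the adjoints in the weak operator topology. I would overcome this with the familiar $\varepsilon/3$ argument: the uniform bound $\|S_n^{*}\|\le M$ together with pointwise convergence on the dense set $\mathrm{span}\{\psi_j\}$ upgrades to $S_n^{*}y\to y$ for all $y\in H$. This yields the expansion $y=\sum_{j}(y,\phi_j)\psi_j$, establishing convergence of the series for every $y$.

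Finally I would establish uniqueness, which also closes the argument. If $y=\sum_j d_j\psi_j$ is any norm-convergent expansion, then pairing with $\phi_k$ and using continuity of the inner product and $(\psi_j,\phi_k)=\delta_{jk}$ gives $d_k=(y,\phi_k)$, so the coefficients are forced. Hence every $y$ has a unique representation in $\{\psi_j\}$, i.e.\ $\{\psi_j\}$ is a basis. I would note that the construction is symmetric: the relation $(\phi_i,\psi_j)=\delta_{ij}$ shows $\{\phi_j\}$ is in turn biorthogonal to $\{\psi_j\}$, so the two roles are interchangeable and the result is self-dual.
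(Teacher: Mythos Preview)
The paper does not prove this statement at all; it merely quotes it as a known result from Gohberg--Krein \cite[p.\,307]{Gohberg} and uses it later as a tool. There is therefore no ``paper's own proof'' to compare against.

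That said, your argument is correct and is essentially the classical one found in Gohberg--Krein: identify the biorthogonal system via Riesz representation, observe that the partial-sum projections $S_n$ are uniformly bounded by Banach--Steinhaus, compute that $S_n^{*}$ are the partial-sum operators for $\{\psi_j\}$, and upgrade pointwise convergence on the dense span of $\{\psi_j\}$ to all of $H$ via the uniform bound. Your remark that strong convergence of $S_n$ does not automatically give strong convergence of $S_n^{*}$, and that the uniform bound is what rescues this, is exactly the point of the proof.
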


%---------------------------------------------------------------------------++

\section{\large Proof of Theorem \ref{th:1.1}}
\label{sec:3}
In this section we prove our main result Theorem \ref{th:1.1}.
\begin{proof}
We transform \eqref{eq:1.2} to the form
\begin{equation}\label{eq:3.1}
L_K^{-1}=L^{-1}+K=(I+KL)L^{-1}.
\end{equation}
By Lemma \ref{lem:2.1} and Lemma \ref{lem:2.3} the operators $KL$ and $KL_K$ are bounded and $I+KL$ is invertible with 
$$
(I+KL)^{-1}=I-KL_K.
$$
Then we have
\begin{eqnarray*} 
A_K^{-1}& =& (I+KL)^{-1}L_K^{-1}(I+KL)  \\[6pt]
& =& (I+KL)^{-1}(I+KL)L^{-1}(I+KL)=L^{-1}(I+KL).
\end{eqnarray*}
Hence, by Corollary 1 \cite[p.\,259]{Fillmore} we have $D(A_K)=D(L)$ and
$$
A_K=(I-KL_K)L=L-KL_K L.
$$
\end{proof}

\begin{corollary}\label{cor:3.1}
Suppose the hypothesis of Theorem \ref{th:1.1} is satisfied. Then a correct extension $L_K^*$ of a minimal operator $M_0$ is similar to the correct operator
$$
A_K^*=L^*(I-L_K^*K^*),
$$
on
$$
D(A_K^*)=\big\{v\in H:\, (I-L_K^*K^*)v\in D(L^*)\big\}.
$$
\end{corollary}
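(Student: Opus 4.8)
The plan is to obtain Corollary \ref{cor:3.1} by taking adjoints throughout the proof of Theorem \ref{th:1.1}, exploiting the fact that the similarity relation established there is realized at the level of bounded inverses. Recall that the theorem produces a bounded invertible operator, namely $I+KL$ (with inverse $I-KL_K$ by Lemma \ref{lem:2.3} and the remark following it), such that
$$
A_K^{-1}=(I+KL)^{-1}L_K^{-1}(I+KL).
$$
Taking Hilbert-space adjoints of this identity immediately gives a similarity relation between $(A_K^{-1})^*=(A_K^*)^{-1}$ and $(L_K^{-1})^*=(L_K^*)^{-1}$, which is precisely the inverse of the correct extension $L_K^*$ of $M_0$. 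So the first step is to write
$$
(A_K^*)^{-1}=(I+KL)^*\,(L_K^*)^{-1}\,\big((I+KL)^{-1}\big)^*.
$$

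Next I would identify each adjoint factor using the boundedness results already in hand. By Lemma \ref{lem:2.1}, $R(K^*)\subset D(L^*)$ guarantees $\overline{KL}$ is bounded with $(\overline{KL})^*=L^*K^*$, so $(I+KL)^*=I+L^*K^*$; similarly the remark after Lemma \ref{lem:2.3} identifies $((I+KL)^{-1})^*=(I-KL_K)^*=I-L_K^*K^*$. Substituting and using the factored form \eqref{eq:2.2}, $(L_K^*)^{-1}=(L^*)^{-1}(I+L^*K^*)$, one computes
$$
(A_K^*)^{-1}=(I+L^*K^*)(L^*)^{-1}(I+L^*K^*)(I-L_K^*K^*).
$$
Since Lemma \ref{lem:2.3} gives $(I-L_K^*K^*)^{-1}=I+L^*K^*$, the last two factors combine, leaving $(A_K^*)^{-1}=(I+L^*K^*)(L^*)^{-1}$. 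This is the bounded inverse of the claimed operator, and inverting it (using the same Corollary 1 of \cite[p.\,259]{Fillmore} invoked in the theorem to pass between an operator and its inverse while tracking the domain) yields $A_K^*=L^*(I-L_K^*K^*)$ together with the stated domain $D(A_K^*)=\{v\in H:\,(I-L_K^*K^*)v\in D(L^*)\}$.

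The main obstacle I anticipate is purely the domain bookkeeping rather than the algebra: one must be careful that the adjoint of a product equals the product of adjoints in the correct order, which is only automatic because every operator multiplying the unbounded $(L^*)^{-1}$ is bounded — this is exactly what Lemmas \ref{lem:2.1} and \ref{lem:2.3} secure under the hypothesis $R(K^*)\subset D(L^*)\cap D(L_K^*)$. I would also verify that $(A_K)^{-1}$ is bounded (so that $(A_K^{-1})^*=(A_K^*)^{-1}$ holds and $A_K^*$ is genuinely the adjoint of the correct operator $A_K$), which follows since $A_K$ is correct by the theorem. Given these boundedness facts, the computation of $D(A_K^*)$ reduces to reading off the natural domain of the composition $L^*(I-L_K^*K^*)$, namely those $v$ for which $(I-L_K^*K^*)v$ lands in $D(L^*)$, and no further work is required.
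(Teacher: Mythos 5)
Your proposal is correct and follows exactly the route the paper intends: the corollary is stated without an explicit proof because it is the adjoint counterpart of Theorem \ref{th:1.1}, and your computation --- taking adjoints of the bounded similarity identity, substituting \eqref{eq:2.2}, and collapsing $(I+L^*K^*)(I-L_K^*K^*)=I$ via Lemma \ref{lem:2.3} to obtain $(A_K^*)^{-1}=(I+L^*K^*)(L^*)^{-1}$, hence $A_K^*=L^*(I-L_K^*K^*)$ with the stated domain --- is precisely that omitted argument. The domain and boundedness bookkeeping you flag (all conjugating factors bounded, $A_K$ densely defined and correct so that $(A_K^{-1})^*=(A_K^*)^{-1}$) is handled correctly.
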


\section{\large An application of Theorem \ref{th:1.1} to n-order differentiation operator}
\label{sec:4}
In this section, we give some applications of the main result to differential operators.
 
As a maximal operator $\widehat{L}$ in $L^2(0, 1)$, we consider 
$$
\widehat{L}y = y^{n},
$$ 
with domain $D(\widehat{L}) = W_2^n(0, 1), \,\; n\in \mathbb{N}$. Then the minimal operator $L_0$ is the restriction of $\widehat{L}$ on $D(L_0) = \mathring{W}_2^n(0, 1)$. As a fixed boundary correct extension $L$ of the minimal operator $L_0$, we take the restriction of $\widehat{L}$ on
$$
D(L)=\big\{y\in W_2^n(0, 1):\, y^{(i)}(0)+y^{(i)}(1)=0, \;\, i=1, 2, \ldots, n-1\big\}.
$$ 
We find the inverse to all correct restrictions of $L_K\subset \widehat{L}$
$$
L_K^{-1}=L^{-1}+K,
$$
where
$$
Kf=\sum_{i=1}^{n}w_i(x)\int_0^1f(t)\overline{\sigma}_i(t)\, dt, \;\; \sigma_i\in L^2(0, 1),
$$
and $w_i\in \mbox{Ker}\, \widehat{L}, \; i=1, 2, \ldots, n$ are linearly independent functions with the properties
\[
w_i^{(k-1)}=\left\{
\begin{array}{lr}
1, \;\;  i=k, \\[2pt]
0, \;\;  i\neq k, 
\end{array}
\right. \quad i, k=1, 2, \ldots, n.
\]
Then the operator $L_K$ is the restriction of $\widehat{L}$ on 
$$
D(L_K)=\big\{u\in W_2^n(0, 1):\, u^{(k-1)}(0)+u^{(k-1)}(1)=\int_0^1u^{(n)}(t)\overline{\sigma}_k(t)\, dt, \;\, k=1, 2, \ldots, n\big\}.
$$
We will consider restrictions of $L_K$ with dense domains in $L^2(0, 1)$, that is, 
$$
\overline{D(L_K)}=L^2(0, 1).
$$ 
If $R(K^*)\subset D(L^*)$, then by Corollary \ref{cor:3.1} the operators $\overline{KL}$ and $\overline{KL}_K$ will be bounded in $L^2(0, 1)$ (where bar denotes closure).
Since $\overline{KL}$ is a compact operator, then by Lemma \ref{lem:2.3} the operator $I+KL$ is invertible and
$(I+KL)^{-1}=I-KL_K$. The operator $\overline{KL}$ is bounded if and only if
$$
\sigma_i\in D(L^*)=\big\{\sigma_i\in W_2^n(0, 1):\, \sigma_i^{(k-1)}(0)+\sigma_i^{(k-1)}(1)=0,  \,\;\, i, k=1, 2, \ldots, n\big\}.
$$
Hence, we have
$$
KLy=\sum_{i=1}^{n}w_i(x)\int_0^1 y^{(n)}(t)\overline{\sigma}_i(t)\, dt= (-1)^n\sum_{i=1}^{n}w_i(x)\int_0^1 y(t)\overline{\sigma}_i^{(n)} (t)\, dt.
$$
We find the operator $KL_K$. For this, we invert the operator 
$$
(I+KL)y=y+(-1)^n\sum_{i=1}^{n}w_i(x)\int_0^1 y(t)\overline{\sigma}_i^{(n)}(t)\, dt=u, 
$$
where $y\in D(L), \;\, u\in D(L_K)$. Then we can write 
$$
y=(I-KL_K)u=u-(-1)^n \sum_{i=1}^{n}w_i(x)\sum_{j=1}^{n}\beta_{ij} \int_0^1 u(t)\overline{\sigma}_j^{(n)}(t)\, dt, 
$$
where $\beta_{ij}, \;\, i, j=1, 2, \ldots, n$ is an elements of the inverse matrix $U^{-1}$ of a matrix $U$.
\begin{equation*}
U = 
\begin{pmatrix}
1+(-1)^{n-1}\overline{\sigma}_1^{(n-1)}(0) & (-1)^{n-2}\overline{\sigma}_1^{(n-2)}(0) & \cdots & \overline{\sigma}_1^{(2)}(0) & -\overline{\sigma}_1^{(1)}(0)  & \overline{\sigma}_1(0) \\
(-1)^{n-1}\overline{\sigma}_2^{(n-1)}(0) & 1+(-1)^{n-2}\overline{\sigma}_2^{(n-2)}(0) & \cdots & \overline{\sigma}_2^{(2)}(0)  & -\overline{\sigma}_2^{(1)}(0) & \overline{\sigma}_2(0) \\
\vdots  & \vdots  & \ddots & \vdots  & \vdots & \vdots \\
(-1)^{n-1}\overline{\sigma}_n^{(n-1)}(0) & (-1)^{n-2}\overline{\sigma}_n^{(n-2)}(0) & \cdots & \overline{\sigma}_n^{(2)}(0)  & -\overline{\sigma}_n^{(1)}(0) & 1+\overline{\sigma}_n(0)
\end{pmatrix}
\end{equation*}
Note that the conditions $R(K^*)\subset D(L^*)$ and $\overline{D(L_K)}=L^2(0, 1)$ imply that $\det U\neq0$.
Thereby, the operator
$$
KL_Ku=(-1)^n \sum_{i=1}^{n}w_i(x)\sum_{j=1}^{n}\beta_{ij} \int_0^1 u(t)\overline{\sigma}_j^{(n)}(t)\, dt, 
$$
is a bounded operator in $L^2(0, 1)$. Then the operator $A_K$ has the form
$$
A_Kv=Lv-\overline{KL}_KLv=v^{(n)}-(-1)^n \sum_{i=1}^{n}w_i(x)\sum_{j=1}^{n}\beta_{ij} \int_0^1 v^{(n)}(t)\overline{\sigma}_j^{(n)}(t)\, dt, 
$$
on
$$
D(A_K)=D(L)=\big\{v\in W_2^n(0, 1):\, v^{(k-1)}(0)+v^{(k-1)}(1)=0,  \,\;\,  k=1, 2, \ldots, n\big\}.
$$
The operator $A_K$ can be written as
$$
A_Kv=v^{(n)}-(-1)^n \sum_{i=1}^{n}w_i(x)\sum_{j=1}^{n}\beta_{ij} F_j(v),
$$
where
$$
F_j(v)=<F_j, v>= \int_0^1 v^{(n)}(t)\overline{\sigma}_j^{(n)}(t)\, dt, \;\; j=1, 2, \ldots, n.
$$
It can be seen that $F_j\in W_2^{-n}(0, 1)$ in the sense of Lions-Margins (see \cite{Lions}). 

We transform the boundary conditions of $L_K$ to the form
\begin{equation*}
U 
\begin{pmatrix}
& u(0)+u(1) \\
& u^{(1)}(0)+u^{(1)}(1) \\
&\vdots \\
& u^{(n-1)}(0)+u^{(n-1)}(1)
\end{pmatrix}
=
\begin{pmatrix}
& \int_0^1 u(t)\overline{\sigma}_1^{(n)}(t)\, dt \\
& \int_0^1 u(t)\overline{\sigma}_2^{(n)}(t)\, dt \\
&\vdots \\
& \int_0^1 u(t)\overline{\sigma}_n^{(n)}(t)\, dt
\end{pmatrix}
\end{equation*}
Then we get
\begin{equation}\label{eq:4.2}
u^{(i-1)}(0)+u^{(i-1)}(1)=\sum_{j=1}^{n}\beta_{ij} \int_0^1 u(t)\overline{\sigma}_j^{(n)}(t)\, dt, \;\; i=1, 2, \ldots, n,
\end{equation}
where $u\in D(L_K), \;\; \sigma_j^{(n)}\in L^2(0, 1), \; j=1, 2, \ldots, n$.
The boundary condition \eqref{eq:4.2} is regular in Shkalikov sense (see \cite{Shkalikov}). Then, by virtue of \cite{Shkalikov}, the operator $L_K$ has a system of root vectors forming a Riesz basis with brackets in $L^2(0, 1)$. Thereby the operator $A_K$, being similar to the operator $L_K$, also has a basis with brackets property. The eigenvalues of these operators coincide. If $\{u_k\}_1^{\infty}$ are eigenfunctions of the operator $L_K$, then the eigenfunctions $v_k$ of the operator $A_K$ are related to them by the relations
$$
u_k=(I+KL)v_k=v_k+(-1)^n \sum_{i=1}^{n}w_i(x)\int_0^1 v_k(t)\overline{\sigma}_i^{(n)}(t)\, dt, \;\; k=1, 2, \ldots, n.
$$
If, in particular, we take
$$
\sigma_i^{(n)}(x)=\sign(x-x_i), \;\; 0<x_i<1, \;\; i=1, 2, \ldots, n,
$$
then we get
$$
F_i(v)=-2v^{(n-1)}(x_i), \;\; i=1, 2, \ldots, n.
$$
By Corollary \ref{cor:3.1}, Theorem \ref{ThGohberg}, and \cite[p.\,928]{Shkalikov2}, we can assert that the system of root vectors of the adjoint operator
$$
A_K^*v=(-1)^n\frac{d^n}{dx^n}\bigg[v(x)-(-1)^n \sum_{i, j=1}^{n}\overline{\beta}_{ij}\sigma_j^{(n)}(x) \int_0^1 v(t)w_i(t)\, dt\bigg],
$$
on
$$
D(A_K^*)=\bigg\{v\in L^2(0, 1):\, v(x)-(-1)^n \sum_{i, j=1}^{n}\overline{\beta}_{ij}\sigma_j^{(n)}(x) \int_0^1 v(t)w_i(t)\, dt\in D(L)\bigg\},
$$
form a Riesz basis with brackets in $L^2(0, 1)$.

%---------------------------------------------------------------------------++

\section{\large Example in case $n = 2$}
\label{sec:5}

If the maximal operator $\widehat{L}$ acts as 
$$
\widehat{L}y=-y''
$$ 
on the domain $D(\widehat{L})=W_2^2(0, 1)$, then the minimal operator $L_0$ is the restriction of $\widehat{L}$ on $D(L_0)=\mathring{W}_2^2(0, 1)$. As a fixed operator $L$ we take the restriction of $\widehat{L}$ on
$$
D(L)=\big\{y\in W_2^2(0, 1):\, y(0)=y(1)=0\big\}.
$$
Then
\begin{align*} 
L_K^{-1}f=L^{-1}f+Kf&=-\int_0^x (x-t)f(t)\,dt+x\int_0^1 (1-t)f(t)\,dt\\
& \quad +(1-x) \int_0^1f(t)\overline{\sigma}_1(t)\,dt+x\int_0^1f(t)\overline{\sigma}_2(t)\,dt,\\[5pt]
Kf&=(1-x) \int_0^1f(t)\overline{\sigma}_1(t)\,dt+x\int_0^1f(t)\overline{\sigma}_2(t)\,dt.
\end{align*}
$KL$ is bounded in $L^2(0, 1)$, if $R(K^*)\subset D(L^*)=D(L)$, that is, 
$$
\sigma_1,\, \sigma_2\in D(L)=\big\{\sigma_1, \sigma_2\in W_2^2(0, 1):\, \sigma_1(0)=\sigma_1(1)=\sigma_2(0)=\sigma_2(1)=0\big\},
$$ 
and has the form
$$
KLy=-(1-x) \int_0^1y(t)\overline{\sigma}_1''(t)\,dt-x\int_0^1y(t)\overline{\sigma}_2''(t)\,dt.
$$
The operator $KL_K$ is also bounded in $L^2(0, 1)$ and 
\begin{align*} 
KL_Ku & = -\frac{1-x}{\Delta} \Big[\big(1- \overline{\sigma}_2'(1)\big)\int_0^1u(t)\overline{\sigma}_1''(t)\,dt +  \overline{\sigma}_1'(1)\int_0^1u(t)\overline{\sigma}_2''(t)\,dt \Big]\\[5pt]
&\quad\; +\frac{x}{\Delta} \Big[\big(1+ \overline{\sigma}_1'(0)\big)\int_0^1u(t)\overline{\sigma}_2''(t)\,dt -  \overline{\sigma}_2'(0)\int_0^1u(t)\overline{\sigma}_1''(t)\,dt \Big], 
\end{align*}
where
$$
\Delta=\big(1+ \overline{\sigma}_1'(0)\big)\big(1- \overline{\sigma}_2'(1)\big)+ \overline{\sigma}_2'(0) \,\overline{\sigma}_1'(1).
$$
Then the operator $A_K$ has the form
\begin{align*} 
A_Kv & = -v''-\frac{1}{\Delta} \Big[\big((1-x)(1- \overline{\sigma}_2'(1))-x \overline{\sigma}_2'(0) \big)\int_0^1v''(t)\overline{\sigma}_1''(t)\,dt \\[5pt]
&\quad\; +  \big((1-x)\overline{\sigma}_1'(1)+x(1+ \overline{\sigma}_1'(0)) \big) \int_0^1v''(t)\overline{\sigma}_2''(t)\,dt \Big],
\end{align*}
on
$$
D(A_K)= D(L)=\big\{v\in W_2^2(0, 1):\, v(0)=v(1)=0\big\},
$$ 
where  $\sigma_1'',\, \sigma_2''\in L^2(0, 1)$.

We rewrite the operator $A_K$ in the form
\begin{equation}\label{eq:5.1}
A_Kv=-v''+a(x)F_1(v)+b(x)F_2(v),
\end{equation}
where
\begin{align*} 
& a(x)= -\frac{1}{\Delta} \big((1-x)(1- \overline{\sigma}_2'(1))-x \overline{\sigma}_2'(0) \big), \;\; F_1(v)=\int_0^1v''(t)\overline{\sigma}_1''(t)\,dt,  \\[3pt]
& b(x)=-\frac{1}{\Delta}\big((1-x)\overline{\sigma}_1'(1)+x(1+ \overline{\sigma}_1'(0)), \;\; F_2(v)=\big) \int_0^1v''(t)\overline{\sigma}_2''(t)\,dt.
\end{align*}
Note that $F_1, F_2\in W_2^{-2}(0, 1)$ in the sense of Lions-Margins (see \cite{Lions}). 

Further, we see that the operator $L_K$ acts as $\widehat{L}$ on the domain
\begin{eqnarray*} 
D(L_K)=\Bigg\{u\in W_2^2(0, 1): \qquad\qquad\qquad\qquad\qquad\qquad\qquad\qquad\qquad\qquad
\end{eqnarray*}
\begin{eqnarray*} 
\begin{pmatrix}
& 1+\overline{\sigma}_1'(0)& 0 & -\overline{\sigma}_1'(1) &0 \\
& \overline{\sigma}_2'(0) & 0 & 1-\overline{\sigma}_2'(1) &0
\end{pmatrix}
\begin{pmatrix}
& u(0) \\
& u'(0) \\
& u(1) \\
& u'(1)
\end{pmatrix}
=
\begin{pmatrix}
& -\int_0^1 u(t)\overline{\sigma}_1''(t)\,dt \\
& -\int_0^1 u(t)\overline{\sigma}_2''(t)\,dt 
\end{pmatrix}
\Bigg\},
\end{eqnarray*}
and
$$
J_{13}=\big(1+\overline{\sigma}_1'(0)\big)\big(1-\overline{\sigma}_2'(1)\big)+\overline{\sigma}_2'(0)\,\overline{\sigma}_1'(1)=\Delta\neq0,
$$
since $R(K^*)\subset D(L^*)$ and $\overline{D(L_K)}=L^2(0, 1)$ .
Then the left hand of this boundary condition is non-degenerate according to Marchenko \cite{Marchenko}, hence regularly according to Birkhoff (see \cite{Shkalikov}). By virtue of Theorem (see \cite[p.\,15]{Shkalikov}), the system of root vectors of the operator $L_K$ form a Riesz basis with brackets in $L^2(0, 1)$.
Thus, by virtue of Theorem \eqref{eq:1.1} the system of root vectors of $A_K$  also form a Riesz basis with brackets and the eigenvalues of $L_K$ and $A_K$ coincide, and the eigenfunctions are related to each other as follows
$$
u_k=v_k-(1-x)\int_0^1v_k(t)\overline{\sigma}_1''(t)\,dt -x\int_0^1v_k(t)\overline{\sigma}_2''(t)\,dt, \;\, k\in \mathbb{N}.
$$
If in the particular case we take 
\begin{equation}\label{eq:5.2}
\begin{split}
& \overline{\sigma}_1''(x)=\sign(x-x_1)-\sign(x-x_2), \\[3pt]
&  \overline{\sigma}_2''(x)=x\big[\sign(x-x_1)-\sign(x-x_2)\big],
\end{split}
\end{equation}
where $0<x_1<x_2<1$, then we get 
\begin{align*} 
& F_1(v)=2v'(x_2)-2v'(x_1), \\[3pt]
& F_2(v)=2x_2v'(x_2)-2x_1v'(x_1)-2v(x_2)+2v(x_1),
\end{align*}
in \eqref{eq:5.1}.

In the case $n=2$, by Corollary \ref{cor:3.1}, Theorem \ref{ThGohberg}, and \cite[p.\,928]{Shkalikov2}, we can assert that the system of root vectors of the operator 
$$
A_K^*v=(-1)^2\frac{d^2}{dx^2}\Big[v(x)-c(x) \int_0^1 (1-t)v(t)\, dt -d(x) \int_0^1 tv(t)\, dt \Big],
$$
on
$$
D(A_K^*)=\Big\{v\in L^2(0, 1):\, v(x)-c(x) \int_0^1 (1-t)v(t)\, dt -d(x) \int_0^1 tv(t)\, dt\in D(L)\Big\},
$$
form a Riesz basis with brackets in $L^2(0, 1)$, where
\begin{align*} 
& c(x)=-\frac{1}{\Delta}\Big[\big(1-\sigma_2'(1)\big)\sigma_1''(x)+\sigma_1'(1)\sigma_2''(x)\Big], \\[3pt]
& d(x)= \frac{1}{\Delta}\Big[\big(1+\sigma_1'(0)\big)\sigma_2''(x)-\sigma_2'(0)\sigma_1''(x)\Big].
\end{align*}
Note that
$$
\sigma_1'', \sigma_2''\in L^2(0, 1), \;\; D(L)=\big\{y\in W_2^2(0, 1):\, y(0)=y(1)=0\big\}.
$$
For clarity, we consider the special case \eqref{eq:5.2}, then we have
\begin{align*} 
& c(x)=\frac{\sign(x-x_1)-\sign(x-x_2)}{\Delta}\Big[1+\frac{x_2^3-x_1^3}{3}-\frac{x_2^2-x_1^2}{2}x\Big], \\[3pt]
& d(x)= \frac{\sign(x-x_1)-\sign(x-x_2)}{\Delta}\Big[\big(1+x_2-x_1 -\frac{x_2^2-x_1^2}{2}\big)x-\frac{x_2^2-x_1^2}{2}+\frac{x_2^3-x_1^3}{3}\Big].
\end{align*}
The domain of $A_K^*$ will have the form
\begin{eqnarray*} 
&& D(A_K^*)= \bigg\{v\in L^2(0, 1)\cap W_2^2(0, x_1)\cap W_2^2(x_1, x_2)\cap W_2^2(x_2, 1):\, v(0)=v(1)=0, \; \\
&&\quad v(x_1-0)-v(x_1+0)=-c(x_1+0)\int_0^1(1-t)v(t)\,dt-d(x_1+0)\int_0^1tv(t)\,dt, \\
&&\quad v(x_2+0)-v(x_2-0)=-c(x_2-0)\int_0^1(1-t)v(t)\,dt-d(x_2-0)\int_0^1tv(t)\,dt, \\
&&\quad v'(x_1-0)-v'(x_1+0)=-c'(x_1+0)\int_0^1(1-t)v(t)\,dt-d'(x_1+0)\int_0^1tv(t)\,dt, \\
&&\quad v'(x_2+0)-v'(x_2-0)=-c'(x_2-0)\int_0^1(1-t)v(t)\,dt-d'(x_2-0)\int_0^1tv(t)\,dt \bigg\},
\end{eqnarray*}
where
\begin{eqnarray*} 
&&c(x_1+0)=  \frac{2}{\Delta}\Big(1+\frac{x_2^3-x_1^3}{3}-\frac{x_2^2-x_1^2}{2}x_1\Big),\\
&&d(x_1+0)=  -\frac{2}{\Delta}\Big(\big(1+x_2-x_1 -\frac{x_2^2-x_1^2}{2}\big)x_1-\frac{x_2^2-x_1^2}{2}+\frac{x_2^3-x_1^3}{3}\Big),\\
&&c(x_2-0)=  \frac{2}{\Delta}\Big(1+\frac{x_2^3-x_1^3}{3}-\frac{x_2^2-x_1^2}{2}x_2\Big),\\
&&d(x_2-0)=  - \frac{2}{\Delta}\Big(\big(1+x_2-x_1 -\frac{x_2^2-x_1^2}{2}\big)x_2-\frac{x_2^2-x_1^2}{2}+\frac{x_2^3-x_1^3}{3}\Big),\\
&&c'(x_1+0)=-\frac{1}{\Delta} \big(x_2^2-x_1^2\big), \\
&&d'(x_1+0)=\frac{2}{\Delta}\Big(1+x_1-x_2 +\frac{x_2^2-x_1^2}{2}\Big), \\[5pt]
&&c'(x_2-0)=  c'(x_1+0), \;\;\, d'(x_2-0)=d'(x_1+0), \\ [5pt]
&&\Delta=1+x_2-x_1 -\frac{x_2^2-x_1^2}{2}+ \frac{x_2^3-x_1^3}{3}+\frac{x_2-x_1}{12}\big((x_2-x_1)^3+6x_1x_2\big)\neq 0, \\ [5pt]
&&\mbox{since} \;\, x_1, x_2\in(0, 1).
\end{eqnarray*}
And the operator $A_K^*$ acts as follows
$$
A_K^*v=-v''(x)+c''(x)\int_0^1 (1-t)v(t)\,dt+d''(x)\int_0^1 tv(t)\,dt,
$$
where
\begin{align*} 
& c''(x)=\frac{2}{\Delta}\bigg[1+\frac{x_2^3-x_1^3}{3}+\frac{x_2^2-x_1^2}{2}(x_2-x_1)\bigg]\big(\delta'(x-x_1)-\delta'(x-x_2)\big) \\[5pt]
&\qquad\qquad -\frac{1}{\Delta}\big(x_2^2-x_1^2\big)\big(\delta(x-x_1)-\delta(x-x_2)\big),
\end{align*}
\begin{align*} 
& d''(x)= \frac{2}{\Delta}\bigg[\Big(1+x_2-x_1 -\frac{x_2^2-x_1^2}{2}\Big)(x_1-x_2)-\frac{x_2^2-x_1^2}{2}+  \frac{x_2^3-x_1^3}{3}\bigg]\\[6pt]
&\qquad\qquad \times \big(\delta'(x-x_1)-\delta'(x-x_2)\big) \\[5pt]
&\qquad\qquad -\frac{1}{\Delta}\Big(1+x_2-x_1 +\frac{x_2^2-x_1^2}{2}\Big)\big(\delta(x-x_1)-\delta(x-x_2)\big),
\end{align*}
here $\delta$ is the Dirac delta function.

%---------------------------------------------------------------------------++
%\newpage

%$\overset{\circ}{W}$ or $\mathring{W}$

%###########################
%\input{referenc}

\vskip 1 cm \footnotesize
\begin{flushleft}
   Bazarkan Nuroldinovich Biyarov, \\
   Faculty of Mechanics and Mathematics\\
   L.N. Gumilyov Eurasian National University \\
   13 Munaitpasov St,\\
   010008 Nur-Sultan, Kazakhstan\\
   E-mail: bbiyarov@gmail.com
\end{flushleft} 

\end{document}